\newtheorem{theorem}{Theorem}[section]
\newtheorem{lemma}[theorem]{Lemma}
\newtheorem{proposition}[theorem]{Proposition}
\newtheorem{corollary}[theorem]{Corollary}
\theoremstyle{definition}
\theoremstyle{remark}
\newtheorem{remark}[theorem]{Remark}
\numberwithin{equation}{section}
\newcommand{\R}{\mathbb{R}}
\newcommand{\Z}{\mathbb{Z}}
\renewcommand{\H}{\mathcal{H}}
\author{Mark Allen}
\address{Department of Mathematics, The University of Texas at Austin, Austin,
  TX 78712}
\email{mallen@math.utexas.edu}
\author{Wenhui Shi}
\address{Mathematisches Institut, Universit\"{a}t Bonn, Endenicher Allee 62,  53115 Bonn, Germany}
\email{wenhui.shi@hcm.uni-bonn.de}
\thanks{M.~Allen was partially supported by NSF grants DMS-1303632 and DMS-1101139. W. Shi was partially supported by Hausdorff Center of Mathematics and
NSF grant DMS-1101139.}
\thanks{We would like to thank Arshak Petrosyan for suggesting this problem.}
\title[Boundary Temperature Control Problem]{The two-phase parabolic Signorini problem}
\begin{document}

\begin{abstract}
We study solutions to a variational inequality that models heat control on the boundary. This problem can be thought of as the two-phase parabolic Signorini problem. Specifically, we study variational solutions to the inequality
\[   
\begin{aligned}
&\int_{\Omega_T}(\partial_t u)(w-u) +  \nabla u  \nabla (w-u)\ dx dt  \\
&+ \int_{S}{\lambda_+ (w^+ - u^+) + \lambda_- (w^- - u^-)  } \ d\H^{n-1} d t \geq 0 
\end{aligned}
\]
without any sign restriction on the function $u$. The main result states that the two free boundaries 
(in the topology of $S:= \partial \Omega \times (0,T)$)
\[
\Gamma^+ = \partial \{u> 0\} \cap S  \text{ and }
\Gamma^- = \partial \{u< 0\} \cap S 
\]
cannot touch. i.e. $\Gamma^+ \cap \Gamma^- = \emptyset$, therefore reducing the study of the free boundary to the parabolic Signorini problem. The separation also allows us to show the optimal regularity of the solutions. 
\end{abstract}

\maketitle
\section{Introduction}    \label{s:introduction}
\subsection{Background and main results}
In this paper we study a parabolic thin obstacle type equation
\begin{align}
&\Delta u-\partial_t u =0 \quad \text{ in } \Omega_T:=\Omega \times (0,T),\notag\\
&-\partial_\nu u\in \left\{
\begin{matrix}
\lambda_+ &\text{ if } u>0\\
[-\lambda_-,\lambda_+] &\text{ if } u=0 \\
-\lambda_- &\text{ if } u<0 
\end{matrix} \right. 
\quad \text{ on } S:=\partial \Omega \times (0,T),\label{e:sig0}\\
&u(\cdot, 0)= u_0(x) \quad \text{ on } \Omega,\notag
\end{align}
where $\Omega $ is a bounded domain in $\R^n$ ($n\geq 2$) with $C^2$ boundary, $\nu$ is the exterior unit normal, $\lambda_+$, $\lambda_-$ are positive constants and $u_0$ is a given $C^2$ function.

This problem arises as a limiting case in modeling the heat control on the \emph{boundary} \cite{dl76}. Specifically, consider the problem of maintaining the temperature on 
the boundary $\partial \Omega$ in the fixed interval $[h_1, h_2]$. If the temperature rises above $h_2$ or below $h_1$
at a point $x \in \partial \Omega$
we cool or heat the domain by injecting a fixed quantity of heat at $x$. A similar problem was recently studied in \cite{ac10}. Our problem formulated in \eqref{e:sig0} corresponds to the limiting case when $h_1=h_2$ with instantaneous heat control.  


There are several relevant reasons for studying the specific limiting case in which $h_1=h_2$. The problem in \eqref{e:sig0} can be seen as the ``thin'' version of the two-phase parabolic obstacle problem 
\begin{equation}\label{e:thick}
  \Delta u-\partial_t u=\lambda_+\chi_{\{u>0\}}-\lambda_-\chi_{\{u<0\}} \text{ in } \Omega_T, 
\end{equation}
which arises as a limiting case for a model for heat control regulated through the \emph{interior} of the domain \cite{dl76}. Then in both mathematical formulation and application, \eqref{e:sig0} can be considered the ``parabolic two-phase thin obstacle problem''.

In the two-phase problem, it is interesting to study the interaction between the free boundaries coming from the two phases, i.e. $\Gamma^\pm=\partial\{ \pm u>0\}$ (or $\Gamma^\pm =\partial\{\pm u>0\}\cap S$ in the ``thin" version). The ``branch point" of the free boundaries of \eqref{e:thick} was studied in \cite{SNW09}.
In this paper, we show that in the ``thin" case there is no ``branch point" of the free boundaries. More precisely, we prove the following:
\begin{theorem}\label{t:sep}
Let $u$ be a solution to \eqref{e:sig0}. Then the free boundaries of the two phases do not intersect, i.e.
$\Gamma^+\cap \Gamma^-=\emptyset$.
\end{theorem}

The above theorem states that even if the positivity and negativity phases touch at the initial time, there is an immediate separation of the free boundaries as $t>0$. We emphasize that such separation is in general not true in the classical two-phase obstacle type problem (see \cite{SW06}, \cite{SNW09b} for the stationary case and \cite{SNW09} for the evolutionary case), where the free boundary is in the \emph{interior} of the domain and of codimension one.

Recently, the elliptic (time-independent) version of this problem was studied in \cite{alp12}. The authors in \cite{alp12} showed that indeed the separation of the free boundaries is preserved in the limiting case. 

\subsection{Consequences of separation} \label{s:consequence}
With the separation of the free boundaries at hand, one can locally reduce \eqref{e:sig0} to the \emph{parabolic Signorini problem} (also called one-phase thin obstacle problem): 
\begin{align}
&\Delta u-\partial_t u=f \text{ in } \Omega_T;\label{e:sig1}\\
& u\geq \psi, \quad \partial_\nu u \geq 0, \quad (u-\psi)\partial_\nu u=0 \text{ on } S,\label{e:sig2}
\end{align}
where $\psi$ is a given $C^2$ function on $\partial \Omega$ and $f$ is bounded.  

The reduction can be done as follows. Let $u$ be a solution to \eqref{e:sig0} and $(x,t)\in \Gamma^+$ (or $\Gamma^-$). Due to the separation of the two phases, there is a cylindrical neighborhood $\mathcal{Q}$ of $(x,t)$ such that $u\geq 0$ on $\mathcal{Q}\cap S$. Let $v=u-\lambda_+ d(x)$, where $d(x)=dist((x,t),\mathcal{Q}\cap S )$ for $(x,t)\in \mathcal{Q}$. Since $d(x)=0$, $\nabla d(x)=-\nu_x$ on $\mathcal{Q}\cap S$, then it easy to see that $v$ is a local solution to the parabolic Signorini problem with $f=-\lambda_+\Delta d$ and $\psi=0$. Moreover, the free boundary remains the same under this reduction. 

The separation of the free boundary has two main consequences. The first consequence is that the study of the local properties of the free boundaries of \emph{two-phase boundary temperature control problem} \eqref{e:sig0} is completely reduced to studying the free boundary in the \emph{parabolic Signorini problem} \eqref{e:sig1}-\eqref{e:sig2}. 
For the latter problem, when the free boundary is on a flat portion of $\partial \Omega\times (0,T)$, the optimal regularity of the solution and the structure of the free boundary are studied in \cite{dgpt11}. Therefore, as a corollary to Theorem \ref{t:sep}, when $\Gamma^\pm$ are contained on a flat portion of $\partial \Omega \times (0,T)$, we obtain results for the structure 
of the free boundary. 

The second consequence of the separation is that we obtain immediate optimal  regularity for the solutions
(see Corollary \ref{c:optimal}) when $\partial \Omega \times (0,T)$ is flat, and H\"older continuity of spatial 
gradients when $\partial \Omega \times (0,T)$ is not necessarily flat. 
H\"older regularity for solutions can be obtained through a standard argument
(see section \ref{s:holder}). However, 
obtaining higher regularity - namely, H\"older continuity for the spatial derivatives - is much more difficult and 
technical. 
For the parabolic Signorini problem, the regularity of solutions was studied in \cite{u85} and \cite{au96} where a
method for obtaining H\"older continuity of the gradient is given. Optimal regularity of solutions to the parabolic
Signorini problem was recently proven in \cite{dgpt11}. Rather than attempt to adapt and utilize the arguments in 
\cite{u85}, \cite{au96}, and \cite{dgpt11} to a two-phase problem, we obtain the same regularity results quickly 
by first proving the separation of the phases and reducing the regularity 
problem to that of a one-phase problem. 

\subsection{Further remarks}
One may notice that the separation of the positivity and negativity phases would be an immediate consequence 
if the H\"older continuity of gradients was a priori known. 
One contribution of this paper is providing a method for two-phase problems where 
a certain sufficient regularity of solutions would imply a separation of the phases but proving such sufficient regularity could be lengthy and difficult. By first proving a separation, one may reduce the problem (locally) 
to a one-phase problem and thus not have to repeat the arguments for the two-phase problem. Our paper presents a method for proving a separation of the phases that is relatively short and simple and does not require first obtaining
H\"older continuity of spatial derivatives. 

Our method can be easily generalized to more general parabolic operators
$L=-\partial_t+\partial_i(a^{ij}\partial_j) +b^i\partial_i +c$ with $a^{ij}\in C^{0,1}(\overline{\Omega})$ positive definite, $b^i, c\in L_\infty(\Omega)$ and $c\geq 0$. All the results before Theorem~\ref{c:uniform} will still hold true.

\subsection{Outline of the paper}
The outline of this paper is as follows.

- In section \ref{s:notation} we provide the notation that will be used throughout the paper. 



- In section \ref{s:holder} we outline the existence, uniqueness and Lipschitz regularity of the solutions in the sense of variational inequalites.  

- In section \ref{s:nondegeneracy} we prove a nondegeneracy estimate that states that solutions must grow by a certain factor away from free boundary points. 



- In section \ref{s:separation} we use the results from the previous sections in combination with a monotonicity formula to provide a simple proof of Theorem \ref{t:sep}. We also state as a consequence results involving the optimal regularity and the free boundary. 


\section{Notation}  \label{s:notation}
We will follow the notation used in \cite{dgpt11}. For a point $x \in \R^n$ we denote $x=(x',x_n)$ where $x'=(x_1, \ldots, x_{n-1})$. For $r>0$, $x_0\in \R^n$, $t_0\in \R$ we let
\begin{alignat*}{3}
&B_r(x_0) &&=\{x\in\R^n\mid |x| < r\} &&\text{ (Euclidean ball)}\\
&B^\pm_r(x_0) &&= B_r(x_0)\cap \R^n_\pm &&\text{ (Euclidean halfball)}\\
&B'_r(x_0) &&=B_r(x)\cap \R^{n-1} &&\text{ (``thin'' ball)}\\
&Q_r(x_0,t_0) &&= B_r(x_0)\times(t_0-r^2,t_0] &&\text{ (parabolic cylinder)}\\
&Q_r'(x_0,t_0) &&= B_r'(x_0)\times(t_0-r^2,t_0] &&\text{ (``thin'' parabolic cylinder)}\\
&Q_r^\pm(x_0,t_0) &&= B_r^\pm(x_0)\times(t_0-r^2,t_0] &&\text{ (parabolic halfcylinders)}\\
&\widetilde Q_r(x_0,t_0) &&= B_r(x_0)\times(t_0-r^2,t_0+r^2) &&\text{ (full parabolic cylinder)}\\
&\widetilde Q_{r}'(x_0,t_0) &&= B_{r}'(x_0)\times(t_0-r^2,t_0+r^2) &&\text{ (full ``thin'' parabolic cylinder)}
\end{alignat*}
For simplicity we omit the center if it is the origin.

For $(x,t),(y,s)\in \R^{n+1}$, $d_p((x,t),(y,s))=\max\{|x-y|,|t-s|^{1/2}\}$ is the parabolic distance; $d_p(E_1,E_2)$ is the parabolic distance between two subsets $E_1, E_2\in \R^{n+1}$.

The parabolic Sobolev spaces $W^{2m,m}_2(\Omega_T)$, $m\in \Z^+$, are the Banach spaces of functions with a generalized derivative $\partial _x^\alpha \partial _t^ju\in L_2(\Omega_T)$ for $|\alpha |+2j\leq 2m$ and the norm
$$\|u\|_{W^{2m,m}_2(\Omega_T)}=\sum_{|\alpha |+2j\leq 2m} \|\partial _x^\alpha \partial _t^ju\|_{L_2(\Omega_T)}.$$
Parabolic Sobolev spaces $W^{1,0}_2(\Omega_T)$, $W^{1,1}_2(\Omega_T)$ are the Banach spaces with
\begin{align*}\|u\|_{W^{1,0}_2(\Omega_T)}&=\|u\|_{L_2(\Omega_T)}+\|\nabla u\|_{L_2(\Omega_T)}\\
\|u\|_{W^{1,1}_2(\Omega_T)}&=\|u\|_{L_2(\Omega_T)}+\|\nabla u\|_{L_2(\Omega_T)}+\|\partial _tu\|_{L_2(\Omega_T)}.
\end{align*}
Parabolic H\"older spaces $H^{\alpha, \alpha/2}(\Omega_T)$ for $\alpha \in (0,1]$ is the set of continuous functions which are H\"older-$\alpha$ in space and H\"older-$\alpha/2$ in time.

\section{Existence, Uniqueness and Lipschitz regularity} \label{s:holder}
We first outline the existence and uniqueness of the weak solution of \eqref{e:sig0} in the sense of variational inequalities (see \cite{dl76} for more details). 
We say $u\in W^{1,1}_2(\Omega_T)$ is a solution of \eqref{e:sig0} if it satisfies
\begin{multline}\label{e:vari}
\int_{\Omega_T} \partial_t u (w-u)+\nabla u \nabla(w-u) +
 \int_{S}\mathcal{B}(w)-\mathcal{B}(u) \geq 0,\quad \forall w\in W^{1,0}_2(\Omega_T),\\
\text{ where }\mathcal{B}(s)=\lambda_+s\chi_{\{s\geq 0\}}-\lambda_-s\chi_{\{s<0\}},\quad u(\cdot, 0)=u_0.
\end{multline}

Existence of solution to \eqref{e:vari} can be shown by a standard approximation approach. For $\epsilon>0$, we consider the approximation problem ($P_\epsilon$):
\begin{align*}
\Delta u_\epsilon -\partial _tu_\epsilon &=0 \text{ in } \Omega_T\\
-\partial _{\nu} u_\epsilon &=\beta_\epsilon (u_\epsilon) \text{ on } S \\
u_\epsilon(\cdot, 0) &= u_0 \text{ on } \Omega,
\end{align*}
where $\beta_\epsilon \in C^\infty (\R)$ is an approximation to $\mathcal{B}'$, such that 
\begin{equation*}
\beta_\epsilon(s)=\lambda_+ \text{ if } s\geq \epsilon;\quad \beta_\epsilon(s)=-\lambda_- \text{ if } s\leq -\epsilon;\quad \beta'_\epsilon(s)\geq 0; \quad \beta_\epsilon(0)=0.
\end{equation*}
By \cite{LSU}, there exists a unique classical solution 
$u_\epsilon $ to $(P_\epsilon)$. Moreover, $u_\epsilon$ is uniformly bounded in $W^{1,1}_2(\Omega_T)$ by using an energy estimate. 

\begin{proposition}[Existence] \label{prop:existence}
Along a sequence $\epsilon_j\rightarrow 0$, $u_{\epsilon_j}\rightarrow u$ weakly in $W^{1,1}_2(\Omega_T)$, where $u$ solves the variational inequality \eqref{e:vari}.
\end{proposition}
\begin{proof}
We only sketch the proof here. First it is not hard to verify that $u_\epsilon$ solves the following variational inequality
\begin{align*}
\int_{\Omega_T} (\partial_tu_\epsilon)(w-u_\epsilon)+\nabla u_\epsilon \nabla(w-u_\epsilon)+\int_{S} \mathcal{B}_\epsilon(w)-\mathcal{B}_\epsilon(u_\epsilon) \geq 0,
\quad \forall w\in W^{1,0}_2(\Omega_T),
\end{align*}
where $\mathcal{B}_\epsilon$ is the anti-derivative of $\beta_\epsilon$ with $\mathcal{B}_\epsilon(0)=0$. Here we crucially use the convexity of $\mathcal{B}_\epsilon$. 
Up to a subsequence, $u_{\epsilon_j} \rightharpoonup u$ in $W^{1,1}_2(\Omega_T)$ and $u_{\epsilon_j}\rightarrow u$ in $L_2(S)$ by the trace theorem (e.g. I.5.3 in \cite{dl76}). Passing to the limit in the above inequality and arguing as in section 5.6.1 of \cite{dl76}, we obtain that $u$ solves \eqref{e:sig0}.
\end{proof}

Uniqueness of the weak solution is a consequence of the following comparison principle.
\begin{proposition}[Comparison Principle] \label{p:comparison}
Let $u,v$ be two solutions with $u \leq v$ on $\partial_p \Omega_T$. Then $u \leq v$ in $\Omega_T$. 
\end{proposition}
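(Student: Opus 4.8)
The plan is to reduce the statement to an energy estimate on the difference $u - v$, exactly as in the proof of uniqueness (Proposition \ref{p:unique}), but now tracking the sign of the boundary-integral terms. Since $u$ and $v$ are both solutions of \eqref{e:twosig} over $Q$ (with $\lambda_\pm$ replaced by $2\lambda_\pm$ because of the even reflection), the natural test function to compare them is a truncation of their difference. Set $w := (u-v)^+$ and consider testing the variational inequality for $u$ with $w_1 := u - w = \min(u,v)$ and the variational inequality for $v$ with $w_2 := v + w = \max(u,v)$. Both $w_1$ and $w_2$ are admissible competitors on each time slice $[-1,s]$ because $u \le v$ on $\partial_p Q$ forces $w$ to vanish on the lateral parabolic boundary and at the initial time, so $w_1$ agrees with $u$ and $w_2$ agrees with $v$ there; and both lie in the correct Sobolev class since truncations of $W^{1,1}_2$ functions do.

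The key steps, in order, are: (i) plug $w_1 = u - w$ into \eqref{e:twosig} for $u$ on $B_1^+\times[-1,s]$, obtaining
\[
\int_{B_1^+\times[-1,s]} \partial_t u\,(-w) + \langle \nabla u, \nabla(-w)\rangle + \int_{B_1'\times[-1,s]} \lambda_+((\min(u,v))^+ - u^+) + \lambda_-((\min(u,v))^- - u^-)\,d\H^{n-1}\,dt \ge 0;
\]
(ii) plug $w_2 = v + w$ into \eqref{e:twosig} for $v$, getting the analogous inequality with $\partial_t v\, w + \langle \nabla v, \nabla w\rangle$ and the thin term $\lambda_+((\max(u,v))^+ - v^+) + \lambda_-((\max(u,v))^- - v^-)$; (iii) add the two inequalities. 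In the solid part one gets $-\int \partial_t(u-v)\,w - \int \langle \nabla(u-v), \nabla w\rangle$; since $\nabla w = \nabla(u-v)$ on $\{u>v\}$ and $w = (u-v)^+$, this equals $-\tfrac12\int \partial_t(w^2) - \int |\nabla w|^2$. In the thin part, the four truncation terms combine: on $\{u > v\}$ one has $\min(u,v) = v$ and $\max(u,v) = u$, so the sum of the thin integrands becomes $\lambda_+(v^+ - u^+) + \lambda_-(v^- - u^-) + \lambda_+(u^+ - v^+) + \lambda_-(u^- - v^-) = 0$; on $\{u \le v\}$, $w = 0$ and all terms vanish trivially. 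Hence the thin contribution is exactly zero, and we are left with
\[
-\int_{B_1^+}\int_{-1}^{s} \partial_t\!\left[\tfrac12 w^2\right] \ge \int_{B_1^+\times[-1,s]} |\nabla w|^2 \ge 0.
\]
Since $w(\cdot,-1) = 0$, this gives $\int_{B_1^+\times\{s\}} w^2 \le 0$ for every $s \in (-1,0]$, so $w \equiv 0$, i.e. $u \le v$ in $Q$.

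The main obstacle I anticipate is the rigorous justification of the manipulations involving $\partial_t$ — namely identifying $\int \partial_t(u-v)\,(u-v)^+$ with $\tfrac12\int\partial_t((u-v)^+)^2$ and integrating it in time against the zero initial datum. This is the standard subtlety in parabolic variational inequalities: $\partial_t u$ and $\partial_t v$ individually need not be better than $L_2$, and the chain rule for the time derivative of a truncation requires a Steklov-averaging or mollification argument (as in \cite{LSU}), together with the fact, guaranteed by Lemma \ref{lem:Lip}, that solutions lie in $W^{2,1}_2$ on interior subcylinders so that $\partial_t(u-v) \in L_2$ and the integration by parts in $t$ is legitimate. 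A secondary but routine point is checking that $\min(u,v)$ and $\max(u,v)$ genuinely belong to $\mathcal{R}_g$ on each truncated time interval, which follows from $u \le v$ on $\partial_p Q$ and the lattice property of parabolic Sobolev spaces. Once these technical points are in place, the sign bookkeeping in the thin integral — the heart of why the two-phase structure does not spoil the comparison — is exactly the cancellation displayed above.
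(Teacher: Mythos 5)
Your proposal is correct and follows essentially the same route as the paper: you test the inequality for $u$ with $\min(u,v)$ and the inequality for $v$ with $\max(u,v)$, observe that the thin-boundary terms cancel exactly (the paper phrases this as $\mathcal{B}(\max\{u,v\})+\mathcal{B}(\min\{u,v\})=\mathcal{B}(u)+\mathcal{B}(v)$), and reduce to the energy inequality $\tfrac12\int\partial_t[((u-v)^+)^2]+|\nabla (u-v)^+|^2\le 0$, concluding from the vanishing of $(u-v)^+$ on the parabolic boundary. Your added remarks on admissibility of the truncated competitors and the Steklov-averaging justification of the chain rule in $t$ only make explicit what the paper leaves implicit.
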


\begin{proof}
Let $w_1 = \max \{u,v\}$ and $w_2 = \min \{u,v\}$. Then 
\begin{equation}\label{eq:equal-B}
\mathcal{B}(w_1) + \mathcal{B}(w_2) = \mathcal{B}(u) + \mathcal{B}(v),
\end{equation}
Taking $w=w_1$ in \eqref{e:sig0} for $v$ and taking $w=w_2$ in \eqref{e:sig0} for $u$, we have
\[
\int_{\Omega_T}\partial_t v (w_1-v)+\nabla v\nabla (w_1-v) +\int_{S}\mathcal{B}(w_1)-\mathcal{B}(v)\geq 0
\]
\[
\int_{\Omega_T}\partial_t u(w_2-u)+ \nabla u \nabla (w_2-u) +\int_{S}\mathcal{B}(w_2)-\mathcal{B}(u)\geq 0.
\]

Let $\theta:=u-v$. Note that $w_1-v=\theta^+$ and $w_2-u=-\theta^+$. Adding the above two inequalities we have
\[
\int_{\Omega_T} -(\partial_t \theta) \theta^+ -  \nabla \theta \nabla \theta^+  + \int_{S}\mathcal{B}(w_1)+\mathcal{B}(w_2)-\mathcal{B}(v)-\mathcal{B}(u)\geq 0,
\]
which taking account \eqref{eq:equal-B} yields
\[
\int_{\Omega_T} (\partial_t\theta)\theta^++ \nabla \theta \nabla\theta^+ =\frac{1}{2}\int_{\Omega_T}\partial _t[(\theta^+)^2]+|\nabla \theta^+|^2 \leq 0.
\]
Notice that $\theta^+(\cdot, 0)=0$, then it is not hard to see from the above inequality that $\theta^+\equiv 0$ in $\Omega_T$. 
\end{proof}

Next we show up to the boundary Lipschitz regularity of $u_\epsilon$. 

\begin{proposition}[$H^{1,1/2}$ regularity]\label{prop:Lipschitz}
Let $u_\epsilon$ be the solution of ($P_\epsilon$). Then $u_\epsilon \in H^{1,1/2}(\Omega_T)$ with the norm only depending on $n$, $\|u_0\|_{W^{1,2}(\Omega)}$ and $\Omega$. Hence, $u\in H^{1,1/2}(\Omega_T)$ if $u$ is a solution to \eqref{e:vari}.
\end{proposition}
\begin{proof}
The proof is standard. We only outline it here. 
\begin{enumerate}
\item[(1)] By the $L^\infty-L^2$ estimate of the solution we have $\|u_\epsilon\|_{L^\infty(\Omega_T)}$ is uniformly bounded. Thus $\|D^ku_\epsilon\|_{L^\infty(\widetilde{\Omega}_T)}$ for any $\widetilde{\Omega}\Subset \Omega$ is bounded by the interior gradient estimates for the heat equation.  
\item[(2)] Since $\partial \Omega$ is $C^2$, then for any $x_0\in \partial \Omega$ there exists a neighborhood $\mathcal{U}(x_0)$ and a $C^2$ diffeomorphism $T$ such that $T(\mathcal{U}(x_0)\cap \Omega)=B_r^+$ and $T(\mathcal{U}(x_0)\cap \partial \Omega)=B'_r$. Moreover, let $\tilde{x}=T(x)$, then $\partial_\nu=-\partial_{\tilde{x}_n}$ on $\mathcal{U}(x_0)\cap \Omega$. Let $\tilde{u}(\tilde{x},t)=u(T^{-1}(\tilde{x}),t)$. In the new coordinates ($P_\epsilon$) reads
\begin{alignat*}{2}
 \tilde{a}_{ij}\partial^2_{ij} \tilde{u}_\epsilon+\tilde{b}_{k}\partial_k \tilde{u}_\epsilon-\partial_t \tilde{u}_\epsilon&=0 && \quad \text{ in } B_r^+\times (0,T);\\
 \partial_{\tilde{x}_n}\tilde{u}_\epsilon&=\beta_\epsilon(\tilde{u}_\epsilon) &&\quad \text{ on } B'_r\times (0,T),
\end{alignat*}
with $\tilde{a}_{ij}(\tilde{x})\in C^1(B_r^+\cup B'_r)$ and $\tilde{b}_k(\tilde{x})\in C^0(B_r^+\cup B'_r)$. 
\item[(3)] Take the test function $\partial _{i}[(\partial _{i}\tilde{u}_\epsilon ) \xi^2]$, $i\neq n$, $\xi\in C^\infty_0(B_r^+\cup B'_r)$ in the variational formulation of the above equation. Then
$\|\partial ^2_{ij}\tilde{u}_\epsilon\|_{L_2(K)}\leq C$ for $i\neq n$ and $K\Subset (B_{r}^+\cup B'_{r})\times (0,T)$. Using the equation for $\tilde{u}_\epsilon$ we get $\|\partial^2 _{nn}\tilde{u}_\epsilon\|_{L_2(K)}\leq C$ . Next we take the test function $\partial_i \tilde{u}_\epsilon (\partial_\ell \tilde{u}_\epsilon - k)_+ \xi$ with $i, \ell\neq n$ and $k\geq \|D u_0\|_{L^\infty(\Omega)}$. By the arguments in chapter II of \cite{LSU}, we have $\|\partial_\ell \tilde{u}_\epsilon\|_{L^\infty(\tilde{K})}\leq C\|D^2 \tilde{u}_\epsilon\|_{L^2((B_{r}^+\cup B'_{r})\times (0,T))}$ for $\ell\neq n$ and $\tilde{K}\Subset (B_{r}^+\cup B'_{r})\times (0,T)$. By using the equation for $\partial_n \tilde{u}_\epsilon$ and the boundedness of $\beta_\epsilon$, we get $\|\partial_{n} \tilde{u}_\epsilon\|_{L^\infty(\tilde{K})}\leq C$. We remark here that the constant $C$ above is uniform in $\epsilon$ and we have used $\beta_\epsilon'\geq 0$. 
\item[(4)] From the maximum principle argument of Gilding, $\tilde{u}_\epsilon$ is uniformly H\"older-$1/2$ in $t$ on $B_{r'}\times (0,T)$ for each $r'<r$. The maximum principle argument can be found in e.g. Chapter II of \cite{Lieberman}. Since we are in a slightly different situation, we reproduce the proof in the Appendix.
\end{enumerate}
\end{proof}

From now on, unless otherwise stated, 
we will work on the local solutions of \eqref{e:vari} with straightened boundary. 
That is, we consider the solution $u$ to the following variational inequality
\begin{align}
\int_{Q_1^+} (\partial_t u) v+ a^{ij}(x)\partial_i u\partial_jv +b^i(x) u\partial_i v+c(x) uv+\int_{Q'_1} \mathcal{B}(u+v)-\mathcal{B}(u)\geq 0,\notag\\
\text{ for any } v\in \{ v\in W^{1,0}_2(Q_1^+): v=0  \text{ a.e. on } (\partial B_1)^+\times (-1,0]\},\label{e:localsig}
\end{align}
where the coefficient matrix $(a^{ij})\in C^{0,1}$ is positive definite and $a^{in}=0$ on $Q'_1$ for $i\neq n$, $b^i$ and $c$ are bounded, $c\geq 0$. We remark that the off-diagonal assumption, i.e. $a^{in}=0$ on $Q'_1$, it not restrictive, because it can always be achieved for positive definite Lipschitz metric $a^{ij}$ after performing a coordinate transformation (c.f \cite{Ura89}). It is not hard to see that the existence and uniqueness results for solutions of \eqref{e:localsig} are obtained via similar arguments as the constant coefficient case (i.e. Proposition~\ref{prop:existence} and Proposition~\ref{p:comparison}).
For compactness results we will later reference the following assumption for the coefficients 
 \begin{equation} \label{e:coeff} 
  \Lambda^{-1} |\xi|^2 \leq a^{ij}\xi_i \xi_j \leq \Lambda |\xi|^2 , \quad \| a^{ij} \|_{C^{0,1}} \leq \Lambda ,
  \quad \|b\|_{L_\infty},\|c\|_{L_\infty} \leq \Lambda. 
 \end{equation}
The next corollary is a consequence of Proposition~\ref{prop:Lipschitz}, which provides us with the existence of so called ''blow-ups''.

\begin{corollary} \label{c:blowup}
Let $u$ be a solution to \eqref{e:localsig}. Let 
\begin{equation}\label{e:recale}
u_r^{(x_0,t_0)}(x,t) := \frac{u(rx+x_0,r^2 t+t_0)}{r}
\end{equation}
be a rescaling of $u$ at $(x_0,t_0)$. Then there exists a sequence $u_{r_k}^{(x_0,t_0)} \to u_0$ such that 
\[
\begin{aligned}
&(1) \quad u_0 \in H^{1,1/2}(Q_R) \text{ for every } R>0 \\
&(2) \quad u_0 \text{ is a local solution to  the constant coefficient variational inequality}\\
&\quad \int_{Q_R^+}(\partial _t u_0) v+a^{ij}(x_0)\partial_i u\partial_j v+\int_{Q'_R}\mathcal{B}(u+v)-\mathcal{B}(u)\geq 0,\\
&\qquad \forall v\in W^{1,0}_2(Q^+_R),\ v=0 \text{ on } (\partial B_R)^+\times (-1,0] \text{ in } Q_R^+ \text{ for every } R>0.
\end{aligned}
\]
\end{corollary}
\begin{remark}
It is a priori not obvious if $u_0$ is zero.
\end{remark}

\section{Nondegeneracy} \label{s:nondegeneracy}
This section is devoted to proving a nondegeneracy property. This result states that the $\sup \ (\inf)$ of a solution must grow linearly from a free boundary point of $\Gamma^+ \ (\Gamma^-)$. In this section we will work on solutions with variable coefficients and flattened boundary $S$. We will work over all of $Q_1$ obtained by proper reflection, i.e. we reflect $u$, $a^{ij}$ with $i,j\leq n-1$ or $i=j=n$, $b^i$ with $i\leq n-1$ and $c$ evenly about $x_n$; reflect $a^{nj}=a^{jn}$ with $j\leq n-1$ and $b^n$ oddly about $x_n$. Note that by the off-diagonal assumption, i.e. $a^{nj}=0$ on $Q'_1$, after the reflection $a^{ij}$ is still Lipschitz continuous.

\begin{lemma}    \label{l:collapse}
There exists $\delta > 0$ depending only on $\lambda_{\pm},\Lambda$ and $n$ such that if $u^{\delta}$ is the solution of \eqref{e:localsig} with constant boundary data $\delta$ on $\partial _p Q_1$, then 
\[
u^{\delta}(x',0,t) = 0 \quad \text{ for all  } (x',0,t) \in Q_{1/2}'
\]
\end{lemma}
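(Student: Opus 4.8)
The plan is to construct an explicit supersolution that vanishes on $Q'_{1/2}$ and invoke the comparison principle (Proposition \ref{p:comparison}). Heuristically, a solution with small positive boundary data $\delta$ can only remain positive on the thin space if its $x_n$-derivative there equals $\lambda_+$; but a function that is $O(\delta)$ on $\partial_p Q_1$ cannot sustain a gradient of fixed size $\lambda_+$ on a thin set of unit scale without becoming negative somewhere, which the positivity set forbids. So for $\delta$ small enough the contact must collapse, i.e. $u_\delta \le 0$ on $Q'_{1/2}$; by symmetry (replacing $u$ by $-u$, which solves the analogous problem with $\lambda_\pm$ swapped) we also get $u_\delta \ge 0$ there, hence $u_\delta = 0$ on $Q'_{1/2}$.

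First I would build a barrier $v$ that is itself a solution (or supersolution) to \eqref{e:twosig} on $Q_1$, is nonnegative but \emph{vanishes on} $Q'_{1/2}$, and equals some fixed constant $\delta_0>0$ on $\partial_p Q_1$. A natural candidate is the (even reflection of the) solution to the one-phase parabolic Signorini problem with zero obstacle and constant boundary data $\delta_0$: by the Proposition \ref{p:connect} correspondence, $v := \widetilde v + \lambda_+ x_n^+$, where $\widetilde v$ solves \eqref{e:sig}, is a solution to \eqref{e:twosig} wherever $v \ge 0$, and its contact set $\{\widetilde v(x',0,t)=0\}$ is exactly where $\partial_{x_n}v \in [0,\lambda_+]$. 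The point is that for the one-phase Signorini problem with constant boundary data, the solution detaches from the thin obstacle in a definite neighborhood of the spatial center once the boundary data is small relative to the size of the domain; equivalently, after rescaling, there is a fixed $\delta_0$ for which the contact set contains $Q'_{1/2}$. Alternatively, and perhaps more cleanly, one can just exhibit $v$ directly: take a smooth function on $Q_1$ that is $\ge \delta_0$ on $\partial_p Q_1$, solves the heat equation, has $\partial_{x_n} v = 0$ on $Q'_{1/2}$, and $\partial_{x_n} v \in [0,\lambda_+]$ on the rest of $Q'_1$ while staying nonnegative there — such a $v$ is a supersolution of \eqref{e:twosig}.

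Then, given such a barrier with its associated $\delta_0$, set $\delta := \delta_0$ (or smaller). Since $u_\delta = \delta \le \delta_0 \le v$ on $\partial_p Q_1$, the comparison principle gives $u_\delta \le v$ in $Q_1$. On $Q'_{1/2}$ we have $v(x',0,t) = 0$, hence $u_\delta(x',0,t) \le 0$ there. Running the same argument for $-u_\delta$ — which solves \eqref{e:twosig} with $\lambda_+$ and $\lambda_-$ interchanged, and has boundary data $-\delta \le \delta_0$ — and using the barrier adapted to the swapped constants, we get $-u_\delta \le 0$ on $Q'_{1/2}$, i.e. $u_\delta \ge 0$. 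Combining, $u_\delta \equiv 0$ on $Q'_{1/2}$, with $\delta$ depending only on $n$ and $\lambda_\pm$ through the construction of the barrier.

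The main obstacle is the construction of the barrier $v$ with the three simultaneous properties: nonnegativity on the thin space, vanishing $x_n$-derivative (equivalently full detachment from the obstacle) on $Q'_{1/2}$, and $0 \le \partial_{x_n} v \le \lambda_+$ on the remaining thin set — all while keeping $v$ caloric in $Q_1^+$ and controllably small on the parabolic boundary. The cleanest route is probably to take the one-phase Signorini solution with constant boundary data, for which detachment in a central region with small data is either classical or follows from the nondegeneracy/growth estimates of Section \ref{s:nondegeneracy} applied to that problem; one then only needs to verify that the detachment region, after scaling the boundary data, contains $Q'_{1/2}$, which fixes $\delta_0$. An additional technical point is that $-u_\delta$ solves the problem with swapped constants, so strictly speaking one needs the barrier construction to work for both orderings of $\lambda_+, \lambda_-$; this is automatic since the construction uses only $\Lambda = \max\{\lambda_+,\lambda_-\}$ as the relevant bound.
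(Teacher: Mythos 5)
Your framing (compare $u_\delta$ with a barrier via Proposition \ref{p:comparison}, and get $u_\delta\ge 0$ on $Q_{1/2}'$ by the symmetric argument or simply by comparison with the zero solution) is reasonable, but the proof hinges entirely on a barrier that is never actually constructed, and the candidate you propose does not have the property you need. The one-phase parabolic Signorini problem \eqref{e:sig} with constant boundary and initial data $\delta_0>0$ is solved by the constant function $\widetilde v\equiv\delta_0$: it is caloric, nonnegative on the thin space, and has vanishing normal derivative there, so the complementarity conditions hold trivially. Hence its contact set is \emph{empty}, and your lift $v=\widetilde v+\lambda_+x_n^+$ equals $\delta_0>0$ on $Q_{1/2}'$ rather than vanishing there; the claim that ``small constant data forces the contact set to contain $Q_{1/2}'$'' is false. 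The error is that the mechanism forcing the solution down to zero on the thin space is precisely the \emph{two-phase} condition $\partial_{x_n}u=\lambda_+>0$ on the positivity set, which has no analogue in the one-phase problem (there the positivity set carries the homogeneous condition $\partial_{x_n}\widetilde v=0$); via Proposition \ref{p:connect}, the Signorini solution associated with $u_\delta$ has boundary data $\delta-\lambda_+x_n$, not the constant $\delta$. Your ``cleaner'' alternative fails for a related reason: the list of properties (caloric, $\ge\delta_0$ on $\partial_pQ_1$, $\partial_{x_n}v=0$ on $Q_{1/2}'$, $\partial_{x_n}v\in[0,\lambda_+]$ elsewhere, nonnegative) does not include $v=0$ on $Q_{1/2}'$ --- indeed the constant $\delta_0$ satisfies all of them, and comparison with such a $v$ yields nothing --- while adding $v=0$ on $Q_{1/2}'$ to $\partial_{x_n}v=0$ there gives zero Cauchy data on a noncharacteristic analytic hypersurface, forcing a caloric $v$ to vanish identically nearby, incompatible with positive outer data. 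So any genuine barrier must have nontrivial normal derivative (up to $\lambda_+$) on part of the thin set or be strictly supercaloric, and constructing it is essentially the content of the lemma itself. Two further points: invoking the nondegeneracy/growth estimates of Section \ref{s:nondegeneracy} would be circular, since Theorem \ref{t:nondegeneracy} is deduced from this lemma; and Proposition \ref{p:comparison} is stated for pairs of solutions, so a supersolution comparison would need its own (routine but unproven) extension.

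For contrast, the paper's proof makes your opening heuristic rigorous by compactness rather than by a barrier: suppose $\delta_k\to 0$ and $u_{\delta_k}(x_k',0,t_k)>0$ with $(x_k',0,t_k)\in Q_{1/2}'$. Comparison with the zero solution gives $u_{\delta_k}\ge 0$, so $u_{\delta_k}-\lambda_+x_n$ solves \eqref{e:sig} by Proposition \ref{p:connect}, and the uniform $H^{\alpha,\alpha/2}$ estimates for $\nabla(u_{\delta_k}-\lambda_+x_n)$ from \cite{dgpt11} (independent of $\delta_k$) yield convergence of $u_{\delta_k}$ and $\nabla u_{\delta_k}$ in $Q_{3/4}$ to the limit $u_0\equiv 0$ and $\nabla u_0$; but $\partial_{x_n}u_{\delta_k}(x_k',0,t_k)=\lambda_+$ at the positivity points, so passing to a subsequential limit point gives $\partial_{x_n}u_0=\lambda_+$ somewhere on $\overline{Q'_{1/2}}$, contradicting $u_0\equiv 0$. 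If you want to keep a barrier-based proof, you would need to produce the supersolution explicitly (e.g.\ a nonnegative supercaloric function vanishing on $Q_{1/2}'$ with $\partial_{x_n}v\le\lambda_+$ on all of $Q_1'$ and $v\ge\delta$ on $(\partial_pQ_1)^+$), which is a nontrivial construction you have not supplied.
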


\begin{proof}
Suppose by way of contradiction that there exists $\delta_k \to 0$ and coefficients $a_{k}^{ij},b_k,c_k$ satisfying
\eqref{e:coeff} and 
points 
$(x_{k}',0,t_k) \in Q_{1/2}'$ such that 
\[
u^{\delta_k}(x_{k}',0,t_k) > 0
\]
By the comparison principle (Proposition \ref{p:comparison}) for local solutions, we have $u^{\delta_k} \geq 0$ in $Q_1$. Hence from Section~\ref{s:consequence}, $u^{\delta_k} - \lambda_+ x_n$ is a solution to the parabolic Signorini problem \eqref{e:sig1}-\eqref{e:sig2} (with variable coefficients) on $Q_1^+$. Then by $H^{\alpha,\alpha/2}$ estimates in $Q_{3/4}$ for $\nabla (u^{\delta_k} - \lambda_+ x_n)$ independent of $\delta$ \cite{au96}, we obtain that up to a subsequence $u^{\delta_k} \to u_0$ and $\nabla u^{\delta_k} \to \nabla u_0$ for some $u_0$ in $H^{\alpha,\alpha/2}$ in $Q_{3/4}$ for $\alpha < 1/2$. It is easy to check that $u_0 \equiv 0$ since the boundary data are zero, and solutions are unique. Now 
\[
 \partial_{x_n} u^{\delta_k}(x_{k}',0,t_k) = \lambda_+
\]
and for a subsequence $(x_{k}',0,t_k) \to (x_{0}', 0 , t_0) \in \overline{Q'}_{1/2}$, so that 
\[
 \partial_{x_n} u_{0}(x_{0}',0,t_0) = \lambda_+
\]
which is a contradiction since $u_0 \equiv 0$. 
\end{proof}

\begin{theorem}[Nondegeneracy] \label{t:nondegeneracy}
Let $u$ be a solution to \eqref{e:localsig}. There exists $\delta > 0$ with $\delta$ 
depending only on $\lambda_{\pm},\Lambda$ and $n$ such that if 
$u|_{\partial_p Q_r}  \leq  \delta r $ 
$(u|_{\partial_p Q_r} \geq  -\delta r)$ then 
\[
u(x) \leq 0 \quad (u(x) \geq 0) \qquad \text{for } x \in (Q'_{r/2} \cap \Omega_T).
\]
\end{theorem}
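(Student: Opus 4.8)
The plan is to reduce Theorem~\ref{t:nondegeneracy} to Lemma~\ref{l:collapse} via the rescaling in Proposition~\ref{p:rescale} and the comparison principle in Proposition~\ref{p:comparison}. First I would treat the ``$+$'' statement, the other following by applying it to $-u$ (which solves \eqref{e:twosig} with $\lambda_\pm$ interchanged, so the constant $\delta$ should be taken as the minimum of the two thresholds). Fix the $\delta>0$ produced by Lemma~\ref{l:collapse}; I will show this same $\delta$ works. By Proposition~\ref{p:rescale}, setting $u_r(x,t)=u(rx,r^2t)/r$ turns $u$ on $Q_r$ into a solution on $Q_1$, and the hypothesis $u|_{\partial_p Q_r}\le \delta r$ becomes $u_r|_{\partial_p Q_1}\le \delta$; also the conclusion $u\le 0$ on $Q'_{r/2}$ is equivalent to $u_r\le 0$ on $Q'_{1/2}$. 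So it suffices to prove: if $u$ solves \eqref{e:twosig} in $Q_1$ with $u\le\delta$ on $\partial_p Q_1$, then $u\le 0$ on $Q'_{1/2}$.

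The second step is the comparison argument. Let $u_\delta$ be the solution with constant boundary data $\delta$ on $\partial_p Q_1$, as in Lemma~\ref{l:collapse}. Since $u\le\delta=u_\delta$ on $\partial_p Q_1$, Proposition~\ref{p:comparison} gives $u\le u_\delta$ in all of $Q_1$; in particular $u\le u_\delta$ on the thin cylinder $Q'_1$, and hence on $Q'_{1/2}$. By Lemma~\ref{l:collapse}, $u_\delta\equiv 0$ on $Q'_{1/2}$, so $u\le 0$ on $Q'_{1/2}$, which is exactly the claim. For the ``$-$'' case, if $u|_{\partial_p Q_r}\ge -\delta r$ then $-u$ satisfies the hypothesis of the ``$+$'' case (with the roles of $\lambda_+$ and $\lambda_-$ swapped), so $-u\le 0$, i.e. $u\ge 0$, on $Q'_{r/2}$.

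One subtlety worth flagging: the comparison principle and Lemma~\ref{l:collapse} are stated for the evenly reflected solution on $Q$ (with $\lambda_\pm$ replaced by $2\lambda_\pm$), so the argument should be run there; this is harmless since the even reflection of $u_\delta$ is exactly the solution on $Q$ with constant data $\delta$, and the thin-cylinder conclusions are unchanged. A second point is the choice of $\delta$: Lemma~\ref{l:collapse} already produces a $\delta=\delta(\lambda_+,\lambda_-,n)$, and applying the same lemma to $-u$ uses the same statement with $\lambda_\pm$ interchanged, giving possibly a different threshold; taking the smaller of the two yields a single $\delta$ depending only on $\lambda_\pm$ and $n$ that works for both one-sided statements.

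I do not expect a serious obstacle here, since all the real work has been isolated into Lemma~\ref{l:collapse} (whose proof in turn rests on the $H^{\alpha,\alpha/2}$ estimates for $\nabla(u_\delta-\lambda_+x_n^+)$ as a Signorini solution, independent of $\delta$) and into the comparison principle. The only thing to be careful about is bookkeeping: making sure the rescaling converts the hypothesis and conclusion correctly, and that the even-reflection normalization of $\lambda_\pm$ is tracked consistently between Proposition~\ref{p:comparison}, Lemma~\ref{l:collapse}, and the statement being proved.
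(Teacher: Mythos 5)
Your argument is correct and follows essentially the same route as the paper: rescale to $Q_1$, compare with the constant-boundary-data solution $u_\delta$ via Proposition \ref{p:comparison}, and invoke Lemma \ref{l:collapse} to conclude $u\le 0$ on $Q'_{1/2}$, with the negative case handled symmetrically. Your extra remarks on the even reflection and on taking the smaller of the two thresholds are harmless refinements of the same proof.
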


\begin{proof}
First we note that by rescaling we only need to prove Theorem \ref{t:nondegeneracy} on $Q_1$. By Lemma \ref{l:collapse} $u^{\delta} = 0$ in $Q_{1/2}'$ for $\delta$ sufficiently small. On the other hand, by the comparison principle (c.f. Proposition \ref{p:comparison}, which is also true in the variable coefficient case)
if $u \leq \delta$ on $\partial_p Q_1$, then $u \leq u^{\delta}$. Combining the above two facts we obtain that $u\leq 0$ in $Q'_{1/2}$ if $u|_{\partial_p Q_1}\leq \delta$ for $\delta $ sufficiently small. The case for which $u|_{\partial_p Q_r} \geq -\delta$ is proven similarly.
\end{proof}

From Theorem \ref{t:nondegeneracy} we immediately obtain the following corollary. 
\begin{corollary} \label{c:lgrowth}
If $u$ is a solution to \eqref{e:localsig} and $0 \in \Gamma^+$ $(0 \in \Gamma^-)$, then 
\begin{equation}   \label{E: 1/2 growth}
\sup_{\partial_p Q_r}u \geq C r \qquad \left( \inf_{\partial_p Q_r}u \leq -C r \right)
\end{equation}
Where $C$ depends only on $\lambda_+, \lambda_-$ and $n$.
\end{corollary}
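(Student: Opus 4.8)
The plan is to deduce this directly from Theorem~\ref{t:nondegeneracy} by contraposition, taking $C$ to be exactly the constant $\delta=\delta(\lambda_\pm,n)$ produced there. I will argue the case $0\in\Gamma^+$; the case $0\in\Gamma^-$ is obtained verbatim from the parenthetical form of Theorem~\ref{t:nondegeneracy} (equivalently, by replacing $u$ with $-u$, which interchanges $\lambda_+\leftrightarrow\lambda_-$ and $\Gamma^+\leftrightarrow\Gamma^-$).

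Fix $r>0$ with $Q_r(0,0)$ inside the domain and suppose, for contradiction, that $\sup_{\partial_p Q_r}u<\delta r$. Then $u|_{\partial_p Q_r}\le\delta r$, so Theorem~\ref{t:nondegeneracy} yields $u(x',0,t)\le 0$ for all $(x',0,t)\in Q_{r/2}'$; in other words $\{u(\cdot,0,\cdot)>0\}$ is disjoint from $Q_{r/2}'(0,0)=B_{r/2}'(0)\times(-r^2/4,0]$. On the other hand, $0\in\Gamma^+$ means $0$ lies in the boundary of the open set $\{u(\cdot,0,\cdot)>0\}$ (open because $u$ is continuous, Lemma~\ref{lem:Lip}), hence $0$ is a limit of thin points $(y_k',0,s_k)$ at which $u>0$. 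For large $k$ these points fall inside $Q_{r/2}'(0,0)$, contradicting the previous sentence. Therefore $\sup_{\partial_p Q_r}u\ge\delta r$, which is the assertion with $C=\delta$.

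There is really no serious obstacle here: all the substance sits in Theorem~\ref{t:nondegeneracy}, and below it in Lemma~\ref{l:collapse} and the comparison principle, Proposition~\ref{p:comparison}. The single point needing a word of care is matching the parabolic half-cylinder $Q_{r/2}'$ appearing in the conclusion of Theorem~\ref{t:nondegeneracy} with the neighbourhood of $0$ in which $\{u>0\}$ must accumulate. If one uses the full Euclidean topology in time one should, strictly speaking, ensure the approximating points $(y_k',0,s_k)$ have $s_k\le 0$; this is guaranteed by the paper's convention that free boundary points inside $Q_r(x_0,t_0)$ are considered only for $t>t_0-r^2$, and in any case can be arranged by running the same comparison on the slightly time-advanced cylinder $Q_r(0,\tau)$ with $\tau>0$ and then letting $\tau\downarrow 0$, using the continuity of $u$ on $\partial_p Q_r(0,0)$.
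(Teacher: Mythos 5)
Your proposal is correct and is exactly the deduction the paper has in mind: the corollary is stated as an immediate contrapositive of Theorem~\ref{t:nondegeneracy} (with $C=\delta$), and the paper gives no further argument. Your side remark about positivity possibly accumulating only from future times is the one genuine subtlety; your time-shifted-cylinder fix handles it (the appeal to the ``$t>t_0-r^2$'' convention alone does not), and the paper itself sidesteps the same issue by invoking the full-cylinder version $\widetilde{Q}_r$ via Remark~\ref{r:invariant} when it uses the corollary.
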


\begin{remark}   \label{r:invariant}
All of the results in this Section may be restated with $Q_r$ replaced by the full cylinder $\widetilde{Q}_r$. The proofs will be identical.  
\end{remark}

\section{The Separation} \label{s:separation}
We begin this section by stating a monotonicity formula for parabolic equations that first appeared in \cite{c93}. Let
\[
G(x,t) := \frac{1}{(4\pi t)^{n/2}} e^{-|x|^2 /4t} \text{  for } (x,t) \in \R^n \times (0,\infty)
\] 
be the heat kernel. Then for a function $v$ and any $t>0$ define
\[
I(t,v) = \int_{-t}^{0} \int_{\R^n} |\nabla v(x,s)|^2 G(x,-s)dx ds
\]

\begin{theorem}  \label{t:monotoneg}
Let $u_1$ and $u_2$ satisfy the following conditions in the strip $\R^n \times [  -1,0 )  $
\[
\begin{aligned}
&(a) \quad \Delta u_i - \partial_t u_i \geq 0, u_i\geq 0 \\
&(b) \quad u_1 \cdot u_2 =0 \\
&(c) \quad u_1(0,0) = u_2(0,0) =0
\end{aligned}
\]
Assume also that the $u_i$ have moderate growth at infinity, for instance 
\[
\int_{B_R}{u_i^2(x,-1) \ dx} \leq Ce^\frac{|x|^2}{4+ \epsilon}
\]
For $R$ large and some $\epsilon >0$. Then 
\[
\Phi(t;u_1,u_2) := \frac{1}{t^2} I(t;u_1) I(t;u_2) 
\]
is monotone increasing for $0<t\leq 1$. 
\end{theorem}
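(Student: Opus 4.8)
The plan is to establish the Caffarelli monotonicity formula of Theorem~\ref{t:monotoneg} by the classical route: reduce to a one-dimensional ODE differential inequality for $\log \Phi$, whose key input is a sharp spectral (Poincaré-type) inequality on the sphere (or rather its Gaussian-weighted parabolic analogue) for the two functions $u_1,u_2$ whose supports are disjoint on each time slice. First I would record that by parabolic scaling it suffices to understand the derivative $\frac{d}{dt}\log I(t;u_i)$ and show
\[
\frac{d}{dt}\log \Phi(t;u_1,u_2) = \frac{d}{dt}\log I(t;u_1) + \frac{d}{dt}\log I(t;u_2) - \frac{2}{t} \geq 0 .
\]
So the whole game is a lower bound of the form $\frac{d}{dt}\log I(t;u_i) \geq \frac{\alpha_i}{t}$ with $\alpha_1 + \alpha_2 \geq 2$, where $\alpha_i$ is governed by a characteristic frequency of $u_i$ restricted to time-slices; the disjointness condition (b) forces the two frequencies to be ``complementary'' so the sum is at least $2$ (the $2$ being the frequency of a linear function, the extremal configuration being two opposite half-space linear functions).

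Concretely, I would first differentiate $I(t;u_i) = \int_{-t}^0\int_{\R^n} |\nabla u_i|^2 G(x,-s)\,dx\,ds$ in $t$; the outer integral contributes the slice term $\int_{\R^n}|\nabla u_i(x,-t)|^2 G(x,t)\,dx$. Then, using the subcaloric condition (a) and integration by parts against the Gaussian weight $G$ (here the moderate growth hypothesis is what legitimizes discarding boundary terms at infinity), I would derive a Rellich–Nečas / Pohozaev type identity relating the ``energy'' $\int |\nabla u_i|^2 G$, the ``boundary flux'' $\int u_i\, \partial_\nu u_i \,G$-type quantity, and $\int |x\cdot\nabla u_i|^2 G$; the upshot is an expression for $\frac{d}{dt}I(t;u_i)$ in terms of $\int |\nabla u_i|^2 G$ on the slice $\{s=-t\}$ and a ``directional'' term. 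Dividing through, $\frac{d}{dt}\log I(t;u_i) \geq \frac{1}{t}\cdot(\text{eigenvalue-type quotient})$. The quotient is exactly the Gaussian Rayleigh quotient $\inf \frac{\int |\nabla' v|^2 e^{-|x|^2/4}}{\int v^2 e^{-|x|^2/4}}$ over functions $v$ supported in the slice-support of $u_i$, i.e. the first eigenvalue of the Ornstein–Uhlenbeck / Hermite operator with a Dirichlet condition on $\{u_i \ne 0\}$.

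The crux — and the step I expect to be the main obstacle — is the \emph{convexity/complementarity inequality} for these two Gaussian eigenvalues: if $\omega_1 = \{u_1 \ne 0\}$ and $\omega_2 = \{u_2 \ne 0\}$ are disjoint (on a slice), then $\alpha_1 + \alpha_2 \geq 2$ where $\alpha_i$ is the Dirichlet-Gaussian first eigenvalue. This is the parabolic analogue of the Friedland–Hayman inequality (which, for the sphere, states the two complementary spherical Dirichlet eigenvalues $\lambda_i$ satisfy $\gamma(\lambda_1)+\gamma(\lambda_2)\geq 2$ for the characteristic exponent $\gamma$). For the Gaussian weight the analogous statement is cleaner: the extremal is a pair of half-spaces, linear functions have $\alpha=1$, and one shows $\alpha_1+\alpha_2\geq 2$ by a symmetrization (Gaussian/Steiner symmetrization reducing to half-spaces) or by a direct argument exploiting that $\alpha \mapsto$ (the eigenvalue of the one-dimensional OU operator on a half-line with variable endpoint) is a convex function of the position of the endpoint. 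Rather than reprove this, I would cite \cite{c93} (and remark that it is the content of Caffarelli's original monotonicity formula), assemble the two differential inequalities, integrate $\frac{d}{dt}\log\Phi \geq 0$, and conclude that $\Phi(t;u_1,u_2)$ is nondecreasing on $(0,1]$. I would also note that condition (c) is what guarantees the relevant slice-supports are genuinely ``small'' (do not fill the whole space) so the eigenvalues are bounded below away from $0$, and that the even reflection used elsewhere in the paper is what lets us apply this formula with $u_1 = (u-\lambda_+ x_n)^\pm$ type functions later.
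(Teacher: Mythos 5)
Your outline is essentially the same treatment the paper gives: the paper does not prove Theorem~\ref{t:monotoneg} but quotes it from \cite{c93} (with the spectral/complementarity input attributed to Beckner--Kenig--Pipher \cite{bkp} in the discussion of the equality case), and your sketch reproduces the standard Caffarelli argument --- differentiate $\log\Phi$, reduce to slice-wise Gaussian Rayleigh quotients, and invoke the Friedland--Hayman-type inequality $\alpha_1+\alpha_2\geq 2$ for disjoint supports --- while deferring that crucial inequality to the same citation. So the proposal is correct at the level of detail the paper itself operates at, and takes the same route.
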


\begin{remark}  \label{r:rescale}
If $u_r$ is defined as in \eqref{e:recale}, then
\[
\Phi(tr^2;u_1,u_2) = \Phi(t;(u_1)_r, (u_2)_r).
\]
\end{remark}

We will also utilize the case of equality for the formula in Theorem \ref{t:monotoneg}. 
\begin{proposition} \label{p:equality}
Let $u_1,u_2$ satisfy the assumptions in Theorem \ref{t:monotoneg}. \\
Then $\phi(t;u_1,u_2)$ is constant if and only if the $u_i$ are two complementary linear functions, i.e., after a rotation $u_1 = \alpha x_n^+$ and $u_2 = \beta x_n^-$ where $\alpha, \beta \geq 0$ are constants. 
\end{proposition}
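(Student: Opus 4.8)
The plan is to establish the "if" direction by direct computation and the "only if" direction via the Alt–Caffarelli–Friedman-style equality analysis for the Almgren-type frequency that underlies Caffarelli's monotonicity formula. For the "if" direction, suppose after rotation $u_1 = \alpha x_n^+$ and $u_2 = \beta x_n^-$. Then $|\nabla u_1|^2 = \alpha^2 \chi_{\{x_n>0\}}$ and $|\nabla u_2|^2 = \beta^2 \chi_{\{x_n<0\}}$, so by symmetry of the Gaussian weight $G(x,-s)$ in $x_n$ one computes $I(t;u_1) = \tfrac12 \alpha^2 \int_{-t}^0 \int_{\R^n} G(x,-s)\,dx\,ds = \tfrac12 \alpha^2 t$, and likewise $I(t;u_2) = \tfrac12\beta^2 t$. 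Hence $\Phi(t) = t^{-2}\cdot \tfrac14\alpha^2\beta^2 t^2 = \tfrac14\alpha^2\beta^2$ is constant. This is the easy half.

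For the "only if" direction, the key is to recall the structure of the proof of Theorem \ref{t:monotoneg}: one rescales in the self-similar parabolic variables, writes $I(t;u_i)$ in terms of a Gaussian-weighted Dirichlet integral over a single time slice, and the monotonicity of $\Phi$ comes from combining the logarithmic derivative $\frac{d}{dt}\log I(t;u_i)$ with a Poincaré-type (Gaussian) inequality on each of the complementary regions $\{u_1 \neq 0\}$ and $\{u_2 \neq 0\}$ — these regions being disjoint by condition (b). The first step is therefore to identify exactly which inequalities in that argument must become equalities when $\frac{d}{dt}\Phi \equiv 0$. Differentiating $\log \Phi = \log I(t;u_1) + \log I(t;u_2) - 2\log t$ and using that each $\frac{d}{dt}\log I(t;u_i) - \frac1t \geq 0$ (this is how the product monotonicity is obtained, by superadditivity of the two "frequencies" whose sum is bounded below using that the supports partition $\R^n$), constancy of $\Phi$ forces each $\frac{d}{dt}\log I(t;u_i) = \frac1t$ for a.e. $t$, AND forces the Gaussian-Poincaré inequalities used to estimate the frequencies to be saturated at every scale.

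The second step is to extract rigidity from saturation of the Gaussian Poincaré / eigenvalue inequality. Equality in the relevant spectral-gap inequality on the Gaussian space forces the blow-up profile at each scale to be a first eigenfunction, i.e. $u_i$ must be \emph{homogeneous of degree one} (in the parabolic self-similar sense this means $x$-homogeneous degree one and $t$-independent after the self-similar change of variables — a genuinely caloric function that is also parabolically homogeneous of degree one is linear), and moreover its restriction to the sphere must be a lowest-mode spherical harmonic supported in a spherical cap, which with the nonnegativity of each $u_i$ on its support pins down $u_i = c_i (e\cdot x)_+$ for a fixed unit vector $e$ (the same $e$ for the two, up to sign, because the supports are complementary). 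Since $u_i$ is a nonnegative subsolution of the heat equation, $x$-homogeneous of degree one and caloric on its (open) support, it must in fact be linear there; combined with $u_1 u_2 = 0$ and $u_i(0,0)=0$ this yields $u_1 = \alpha x_n^+$, $u_2 = \beta x_n^-$ after rotation.

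The main obstacle I anticipate is making the equality analysis rigorous without reproducing the entire proof of Caffarelli's monotonicity formula: one needs to know precisely the chain of inequalities in that proof so as to invoke their equality cases. In practice the cleanest route is to reduce to the known elliptic/stationary equality case — by showing that $\frac{d}{dt}\log I(t;u_i) = \frac1t$ for all $t$ already forces, via the structure of the computation, that the Gaussian-weighted rescalings $u_i(\sqrt t\, x, -t)/\sqrt t$ converge to (indeed equal, by a unique-continuation / scaling argument) a fixed homogeneous-degree-one function, at which point the rigidity is exactly the equality statement in the Alt–Caffarelli–Friedman monotonicity formula for harmonic functions. Citing \cite{c93} (and, for the ACF equality case, the standard references) for the quantitative form of the differential inequality and its equality case should keep the argument short; the remaining work is the bookkeeping to pass from "degree-one homogeneous, caloric on support, sign-definite, complementary supports" to the explicit pair $\alpha x_n^+,\ \beta x_n^-$.
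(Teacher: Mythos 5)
Your proposal follows essentially the same route as the paper: constancy of $\Phi$ forces equality in the eigenvalue-convexity inequality at the heart of the proof of Theorem \ref{t:monotoneg}, whose rigidity (the paper cites the Beckner--Kenig--Pipher result for the Gaussian half-space eigenvalue problem rather than your detour through parabolic homogeneity and the elliptic ACF equality case) gives complementary half-space, linear profiles on each time slice, and caloricity of each $u_i$ on its support then yields time-independence and the pair $\alpha x_n^+$, $\beta x_n^-$. The only differences are cosmetic: the paper omits the trivial ``if'' direction that you compute explicitly, and it invokes the slice-wise Gaussian rigidity directly instead of first reducing to a homogeneous stationary profile.
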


\begin{proof}
The case of equality is determined by replacing all the inequalities with equalities in the proof of Theorem \ref{t:monotoneg}. The fundamental inequality in the proof of Theorem \ref{t:monotoneg} relies on a convexity property of eigenvalues. By a result of Beckner-Kenig-Pipher \cite{bkp}, equality is achieved in that instance when $u_1$ and $u_2$ are two complementary half planes passing through the origin (see discussion in \cite{ck98}). Thus if $\phi(t;u_1,u_2)$ is constant, then on each time slice $\R^n \times \{-s\}$, $u_i$ are two complementary linear functions. In the case of equality, each $u_i$ will also solve the heat equation when positive. Then each $u_i$ is time independent, and the $u_i$ are therefore two complementary linear functions. 
\end{proof}

We now proceed with the proof of Theorem \ref{t:sep}.
\begin{proof}[Theorem \ref{t:sep}]
Let $u$ be a solution to \eqref{e:localsig} in $Q_1^+$. We evenly reflect $u$ across the thin space $\R^{n-1} \times \{0\}$ and consider the solution in all of $Q_1$. Suppose by way of contradiction that there exists a point $(x_{0}',0,t_0) \in \Gamma^+ \cap \Gamma^-$ with $t_0>-1$. Moreover, we may assume $a^{ij}(x'_0,0,t_0)=\delta^{ij}$ by choosing a suitable coordinate chart. 
If $t_0<0$, we will use the results from Sections \ref{s:holder} and \ref{s:nondegeneracy} as stated for $\widetilde Q$ and the estimates that follow will be over $\widetilde{Q}$. If $t_0=0$ we will use the same results as stated for $Q$ and the estimates that follow would be stated for $Q$. We then proceed with the so called ``blow-up'' procedure. We consider the rescalings $u^{(x'_0,0,t_0)}_r$ as defined in \eqref{e:recale}. By Corollary \ref{c:blowup} 
we obtain a subsequence $u^{(x'_0,0,t_0)}_{r} \to u_0$ where $u_0$ is a solution to \eqref{e:vari} on every compact set. We will relabel $u_0=v$. By Corollary \ref{c:lgrowth} and Remark \ref{r:invariant} 
\begin{equation} \label{e:nongrowth}
\sup_{\partial_{p} \widetilde{Q}_R} u \geq CR \quad \text{and} \quad \inf_{\partial_{p} \widetilde{Q}_R} u \leq -CR
\end{equation}
for small $R$. 
Then in the limit, \eqref{e:nongrowth} will also hold for $v$ for every $0<R<\infty$. 
Also by Proposition~\ref{prop:Lipschitz} and Corollary \ref{c:blowup} we have 
\begin{equation} \label{e:boundgrowth}
|v(x,t)| \leq C(|x| + |t|^{1/2}).
\end{equation}
$v^{\pm}$ will satisfy the hypotheses for Theorem \ref{t:monotoneg}. Next we perform a blow-up on $v$. That is we consider the rescalings of $v$ with again $r \to 0$ and obtain a convergent subsequence $v_r \to v_0$. $v_0$ will be a solution to \eqref{e:vari} on every compact set and \eqref{e:nongrowth} will hold for $v_0$. By Theorem \ref{t:monotoneg}
\[
\Phi(t,v^+,v^-) 
\]
is monotone increasing for $0 < t \leq 1$. Then $\Phi(0+,v^+,v^-)$ is well defined and finite. By Remark \ref{r:rescale} we note that for $0<t \leq 1$
\[
\Phi(t,v_0^+,v_0^-)= \lim_{r \to 0} \Phi(t, v_r^+, v_r^-) = \lim_{r \to 0} \Phi(tr^2, v^+, v^-) = \Phi(0+,v^+,v^-).
\]  
Thus $\Phi(t,v_0^+,v_0^-)$ is constant. By Proposition \ref{p:equality} we conclude that $v_0^{\pm}$ are complementary linear functions. Since $v_0$ is even in the $x_n$ variable and $v_0(x',0,t)$ satisfies $\partial_{x_n}v(x',0,t)=\lambda_+$ when $v(x',0,t) \neq 0$, it follows that $v_0 = c|x_n|$ for $t \leq 0$. 

If $(x_{0}',0,t_0) \in \Gamma^+ \cap \Gamma^-$ was such that $t_0<0$, we must also show $v_0 = c|x_n|$ for $t>0$. Since $v_0$ is a solution to \eqref{e:vari} it follows that $-\lambda_- \leq c \leq \lambda_+$. Consider $w_1= (v_0 - c|x_n| )^+$ and $w_2 = -(v_0 - c|x_n|)^-$. Now $v_0 + c|x_n|$ is a solution to the heat equation when $x_n \neq 0$. Also since $-\lambda_- \leq c \leq \lambda_+$
\[
\partial_{x_n} w_i (x',0,t) \geq 0.
\]
Then by even reflection each $w_i$ is a subsolution to the heat equation with intial condition $w_i(x',x_n,0)=0$. Also each $w_i$ will also satisfy the growth estimate \eqref{e:boundgrowth}.  
It follows from the usual proofs of Tychonoff's theorem (or by bounding subsolutions from above by solutions and applying Tychonoff's theorem) that $w_i \equiv 0$, and so $v_0 \equiv c|x_n|$. This is a contradiction to $v_0$ satisfying \eqref{e:nongrowth}. 
\end{proof}

\begin{remark}  \label{r:sep}
In the above proof we actually showed that if $u$ is a solution to \eqref{e:localsig}, then there is no point $(x_{0}',0,t_0)$ such that 
\begin{equation}  \label{e:nongrowth2}
\sup_{\partial_{p} \widetilde{Q}_r(x_{0}',0,t_0)} v \geq Cr \quad \text{and} 
\quad \inf_{\partial_{p} \widetilde{Q}_r(x_{0}',0,t_0)} v \leq -Cr
\end{equation}
for every $0<r<r_0$ for some fixed $r_0$. 
\end{remark}

As a consequence of Theorem \ref{t:sep} we may obtain a uniform separation of the free boundaries based on a compactness argument.

\begin{theorem}  \label{c:uniform}
Let $u$ be a solution to \eqref{e:localsig} in $Q_1$ with
\[
\|u\|_{H^{1,1/2}(Q_1)}\leq C
\]
Then there exists $d>0$ depending on $C$ such that 
\[
d_p( (\Gamma^+ \cap Q_{1/2}) , (\Gamma^- \cap Q_{1/2})) \geq d 
\] 
\end{theorem}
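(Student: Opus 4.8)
The plan is to argue by contradiction using a compactness/blow-up argument built directly on Theorem~\ref{t:sep}. Suppose the conclusion fails. Then there is a sequence of solutions $u_k$ to \eqref{e:twosig} in $Q_1$ with $\|u_k\|_{H^{1,1/2}(Q_1)}\le C$, together with points $(x_k',0,t_k)\in \Gamma^+(u_k)\cap Q_{1/2}$ and $(y_k',0,s_k)\in \Gamma^-(u_k)\cap Q_{1/2}$ with $d_p\big((x_k',0,t_k),(y_k',0,s_k)\big)\to 0$. By Corollary~\ref{c:convergence}, after passing to a subsequence we may assume $u_k\to u_0$ in $H^{\alpha,\alpha/2}(\overline D_T)$ for $D\Subset B_1$ and every $\alpha<1$, with $u_0$ a solution to \eqref{e:twosig} in (say) $Q_{3/4}$, and the points converge to a common limit $(x_0',0,t_0)\in \overline{Q_{1/2}'}$ with $t_0>-9/16$ (so that $(x_0',0,t_0)$ is interior in the time direction for some slightly smaller cylinder). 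The idea is that this common limit point should lie in $\Gamma^+(u_0)\cap\Gamma^-(u_0)$, contradicting Theorem~\ref{t:sep}.

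The key step is to show the limit point genuinely belongs to both free boundaries of $u_0$. For this I would use the nondegeneracy statement in the form of Remark~\ref{r:sep}: since $(x_k',0,t_k)\in\Gamma^+(u_k)$, Corollary~\ref{c:lgrowth} together with Remark~\ref{r:invariant} gives, for all small $r$,
\begin{equation*}
\sup_{\partial_p \widetilde Q_r(x_k',0,t_k)} u_k \ge C_0 r,
\end{equation*}
and similarly $\inf_{\partial_p \widetilde Q_r(y_k',0,s_k)} u_k \le -C_0 r$, with $C_0$ depending only on $\lambda_\pm$ and $n$. Passing to the limit in $k$ (using uniform convergence on compact subsets and $d_p((x_k',0,t_k),(y_k',0,s_k))\to 0$), for each fixed small $r$ we obtain
\begin{equation*}
\sup_{\partial_p \widetilde Q_r(x_0',0,t_0)} u_0 \ge C_0 r \quad\text{and}\quad \inf_{\partial_p \widetilde Q_r(x_0',0,t_0)} u_0 \le -C_0 r .
\end{equation*}
In particular $u_0$ changes sign in every neighborhood of $(x_0',0,t_0)$ along the thin space, so $(x_0',0,t_0)\in\Gamma^+(u_0)\cap\Gamma^-(u_0)$; but this is exactly the situation excluded by Remark~\ref{r:sep} (equivalently by Theorem~\ref{t:sep}), a contradiction. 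The existence of $d>0$ depending only on $C$ then follows because the argument applies uniformly to the whole class $\{\,u:\|u\|_{H^{1,1/2}(Q_1)}\le C\,\}$.

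I would expect two technical points to require the most care. First, one must be careful with the time endpoint: $\Gamma^+,\Gamma^-$ are defined for $t>t_0'$ past the initial time, so I should set things up on a cylinder $Q_{3/4}$ (or pass through $\widetilde Q$ as in the proof of Theorem~\ref{t:sep}) so that the limit point is not on the initial time slice; the bound $t_k\in(-1/4,0]$ from $(x_k',0,t_k)\in Q_{1/2}$ already gives room, and Remark~\ref{r:invariant} lets us use the two-sided cylinders $\widetilde Q_r$. Second, one needs that the limit of free boundary points is still a free boundary point \emph{of the limit} — this is precisely where nondegeneracy (lower bound, preventing the sign change from disappearing in the limit) and continuity of $u_0$ (upper bound, preventing a jump) are both used, and the nondegeneracy constant being independent of the solution is what makes the final $d$ depend only on $C$. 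The rest is routine: the $H^{1,1/2}$ bound feeds Corollary~\ref{c:convergence}, and compactness of $\overline{Q_{1/2}'}$ handles the selection of convergent subsequences of the base points.
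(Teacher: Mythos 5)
Your proposal is correct and follows essentially the same route as the paper: a compactness argument via Corollary \ref{c:convergence}, then using the nondegeneracy of Corollary \ref{c:lgrowth} (in the $\widetilde Q_r$ form of Remark \ref{r:invariant}) to pass the two-sided linear growth to the limit point, which contradicts Remark \ref{r:sep}. Your write-up merely fills in the limiting details that the paper leaves implicit.
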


\begin{proof}  
Suppose by way of contradiction that there exists a sequence of solutions $u_k$ with 
\[
d_p( (\Gamma^+(u_k) \cap Q_{1/2}) , (\Gamma^-(u_k) \cap Q_{1/2})) \to 0
\]
Up to a subsequence $u_k \to u_0$ in $H^{\alpha, \alpha/2}$ with $u_0$ a solution to \eqref{e:localsig}. Furthermore, as a consequence of Corollary \ref{c:lgrowth} it is clear that there would exist a point $(x_0,t_0) \in \overline{Q}_{1/2}$ and  $(x_0 , t_0)$ would be a point satisfying \eqref{e:nongrowth2} which is a contradiction to Remark \ref{r:sep}. 
\end{proof}

Because of the uniform separation of the free boundaries, we are able to transfer known results for solutions of \eqref{e:sig1}-\eqref{e:sig2} to solutions of \eqref{e:sig0}. In particular we may state results about the optimal regularity of solutions as well as the regularity of the free boundaries. 

\begin{corollary}   \label{c:optimal}  
Let $u$ be a solution to \eqref{e:sig0} in $Q_1^+$ with 
\[
\|u\|_{H^{1,1/2}(Q_1)}\leq C
\]
Then 
\begin{equation}  \label{e:optimal}
\|u\|_{H^{\frac{3}{2},\frac{3}{4}}(Q_{1/2}^+ \cup Q_{1/2}')} \leq C_1
\end{equation}
where $C_1$ is dependent on $C, \lambda_{\pm}$.
\end{corollary}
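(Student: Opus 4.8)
The plan is to reduce the optimal regularity statement for solutions of \eqref{e:twosig} to the known optimal regularity for the parabolic Signorini problem \eqref{e:sig}, using the uniform separation of the free boundaries from Theorem \ref{c:uniform}. First I would fix a point $(x_0,0,t_0) \in Q'_{1/2}$ and split into cases according to the behavior of $u$ on the thin space near that point. If $(x_0,0,t_0)$ lies in the interior of $\{u(\ \cdot\ ,0,t) > 0\}$ (relative to $\R^{n-1}\times\{0\}\times\R$), then in a small parabolic neighborhood $u > 0$ on $Q'$, so by Proposition \ref{p:heat} we have $\partial_{x_n} u = \lambda_+$ there, and $u$ solves the heat equation in $Q^+$ with a Neumann condition; standard parabolic estimates (e.g.\ \cite{LSU}) give that $u$ is smooth up to $Q'$ near that point, in particular better than $H^{3/2,3/4}$. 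The symmetric statement holds in the interior of $\{u(\ \cdot\ ,0,t) < 0\}$, and in the interior of the contact set $\{u(\ \cdot\ ,0,t) = 0\}$ one again gets a smooth one-sided Neumann/Dirichlet problem. So the only delicate points are those on $\Gamma^+ \cup \Gamma^-$.

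Next I would treat a neighborhood of $\Gamma^+$ (the $\Gamma^-$ case being symmetric). By Theorem \ref{c:uniform}, there is $d = d(C) > 0$ such that $\Gamma^+ \cap Q_{1/2}$ is at parabolic distance at least $d$ from $\Gamma^- \cap Q_{1/2}$. Hence around any point of $\Gamma^+ \cap Q_{1/2}$ there is a cylinder $Q_\rho$, with $\rho$ a fixed fraction of $d$, in which $u(\ \cdot\ ,0,t) \ge 0$ everywhere on $Q'_\rho$ — the negativity set does not enter. By Proposition \ref{p:connect} (applied locally, after the rescaling of Proposition \ref{p:rescale}), $v := u - \lambda_+ x_n$ is then a solution to the parabolic Signorini problem \eqref{e:sig} in that cylinder. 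The optimal $H^{3/2,3/4}$ regularity for $v$ is exactly the main regularity theorem of \cite{dgpt11}, with constant depending only on $n$, $\lambda_\pm$, $\rho$, and an $H^{1,1/2}$ bound on $v$ — the latter being controlled by $C$ and $\lambda_\pm$ via Lemma \ref{lem:Lip} and the hypothesis $\|u\|_{H^{1,1/2}(Q_1)}\le C$. Since $\lambda_+ x_n$ is smooth, $u = v + \lambda_+ x_n$ inherits the $H^{3/2,3/4}$ bound near $\Gamma^+$, and symmetrically near $\Gamma^-$ with $\lambda_-$.

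Finally I would patch the local estimates together. Cover $\overline{Q}_{1/2}' \cap (\Gamma^+ \cup \Gamma^-)$ by finitely many of the cylinders above, use the interior heat-equation estimates of Lemma \ref{lem:Lip} away from the thin free boundary, and observe that the number of cylinders and the size $\rho$ depend only on $d(C)$, hence only on $C$; this yields a single constant $C_1 = C_1(C,\lambda_\pm,n)$ with \eqref{e:optimal}. The main obstacle is the bookkeeping at the transition between the ``Signorini region'' near $\Gamma^\pm$ and the smooth regions: one must check that the local $H^{3/2,3/4}$ bounds, which are stated on balls of a definite size depending on $d$, genuinely combine into a global estimate on $Q_{1/2}^+ \cup Q_{1/2}'$ with a constant that does not degenerate — this is where the uniformity in Theorem \ref{c:uniform}, rather than merely the qualitative separation of Theorem \ref{t:sep}, is essential. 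A secondary point to be careful about is that Proposition \ref{p:connect} as stated is global on $\Omega_T$; here it is used locally, which is legitimate because the Signorini regularity theory of \cite{dgpt11} is itself local and the one-sided sign condition $u \ge 0$ on $Q'_\rho$ is all that is needed to run the argument there.
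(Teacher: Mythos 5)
Your proposal is correct and follows essentially the same route as the paper: use the uniform separation of Theorem \ref{c:uniform} to get a definite distance $d$, reduce locally either to smooth heat-equation/Neumann/Dirichlet estimates away from $\Gamma^\pm$ or to the parabolic Signorini problem via $u\mp\lambda_\pm x_n$ near a single free boundary, invoke the optimal regularity of \cite{dgpt11}, and conclude by a covering argument with constants controlled by $d(C)$. Your case analysis (organized by the positivity, negativity, and contact sets on the thin space) differs only cosmetically from the paper's (organized by distance to the free boundaries and membership in the coincidence set), and if anything is slightly more careful about the Neumann condition $\partial_{x_n}u=\lambda_\pm$ in the interior of the phases.
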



\begin{proof}
We begin first by defining the coincidence set
\[
\Lambda(u) := \{(x',0,t) \mid u(x',0,t) =0 \}
\]
We now consider a point $(x,t) \in Q_{1/2}$. Let $d=d_p( (\Gamma^+ \cap Q_{1/2}) , (\Gamma^- \cap Q_{1/2}))$.
If the distance from $(x,t)$ to a free boundary in $Q_{1/2}$ is greater than $d/4$ and $(x,t) \notin \Lambda(u)$, then one may use regular interior estimates for solutions to the heat equation to obtain the bound in \eqref{e:optimal} for $u$ in the cylinder $Q_{d/8}(x,t)$. If $(x,t) \in \Lambda(u)$, then if we perform an odd reflection on $u$ across the thin space $\R^{n-1}\times \{0\}$, then the reflected function $\tilde{u}$ will be a solution to the heat equation in the cylinder $Q_{d/4}$, and so again we obtain \eqref{e:optimal} for $u$ in the half cylinder $Q_{d/8}^+(x,t) \cup Q_{d/8}'(x,t)$. If the distance from $(x,t)$ to a free boundary is less than or equal to $d/4$, then either $u \mp \lambda_{\pm}x_n^+$ is a solution to \eqref{e:sig1}-\eqref{e:sig2} in $Q_{d/4}^+(x,t)$ and we utilize the optimal regularity result in \cite{dgpt11} to conclude \eqref{e:optimal} for $u$ in $Q_{d/8}^+(x,t) \cup Q_{d/8}'(x,t)$. Then by a covering argument we may conclude the result.
\end{proof}

\begin{remark}
The above regularity result is optimal since Re$(x_{n-1}+ix_n)^{3/2}+ \lambda_+ x_n$ is a time-independent solution to \eqref{e:sig0} in $Q_r(0,0)$ for $r$ sufficiently small and $\lambda_+$ sufficiently large. 
\end{remark}



\section{Appendix}\label{s:appendix}
In the Appendix, we use the maximum principle argument of Gilding to show that if a solution to a parabolic equation is Lipschitz in $x$, then it is H\"older-$1/2$ in $t$. In the interior case, this is Theorem 2.13 in \cite{Lieberman}. Since we are in a slightly different situation (up to the boundary estimate with the Neumann boundary data), for the completeness and the convenience of the reader we provide a proof.
\begin{proposition}\label{prop:reg_in_t}
Let $u\in C^{2,1}(Q_1^+\cup Q'_1)$ be a classical solution of 
\begin{align*}
a^{ij}(x,t)\partial^2_{ij}u+b_i(x,t)\partial_iu-\partial_tu=0 \text{ in }Q_1^+\\
\partial_{n}u=f \text{ on } Q'_1,
\end{align*}
where $a^{ij}$ is uniformly elliptic with 
\begin{align*}
\lambda_0|\xi|^2\leq a^{ij}(x,t)\xi_i\xi_j\leq \Lambda_0|\xi|^2, \quad \text{ for any }\xi\in \R^n \text{ and } (x,t)\in Q_1^+
\end{align*}
and it satisfies the off-diagonal condition $a^{i,n}(x',0, t)=0$ on $Q'_1$; $b^i(x,t), f(x,t)\in L^\infty(Q_1^+)$. Let $C_0:=\sum_{i=1}^n\|b^i\|_{L^\infty(Q_1^+)}+\|f\|_{L^\infty(Q_1^+)}$. Assume that for each $t\in (-1,0]$, 
\begin{align*}
\|\nabla u(\cdot, t)\|_{L^\infty(Q_1^+)}\leq L,
\end{align*}
for some $L>0$. Then there exists $C=C(n,C_0,L,\lambda_0,\Lambda_0)$ such that for each $x_0\in B_{1/2}^+\cup B'_{1/2}$,
\begin{align}\label{eq:reg_in_t}
|u(x_0,t_1)-u(x_0,t_0)|\leq C|t_1-t_0|^{1/2}, \quad t_1,t_0\in (-1,0).
\end{align}
\end{proposition}
\begin{proof}
We first show \eqref{eq:reg_in_t} for $x_0\in B'_{1/2}$.

Let $K=\Lambda_0+2C_0+2C_0^2$.  Given $t_0\in (-1,0)$ and $r\in (0,1/2)$ with $t_0+r^2/(4nK)<0$,  let
\begin{align*}
s:=\sup_{t\in (t_0,t_0+r^2/(4nK)]}|u(x_0,t)-u(x_0,t_0)|. 
\end{align*}
We want to show that $s\leq Cr$ for some $C>0$. If $s<r$, then we are done. If $s\geq r$, we consider 
$$ v^{\pm}(x,t)=\frac{2snK}{r^2}(t-t_0)+\frac{s}{r^2}|x-x_0|^2+(L+C_0 )r\pm (u-u(x_0,t_0))-C_0 x_n.$$
We will apply the maximum principle to $v^\pm$ in 
$$\widehat{Q}^+:=B_r^+(x_0)\times (t_0,t_0+r^2/(4nK)].$$
A direct computation shows that
\begin{align*}
a^{ij}\partial^2_{ij}v^{\pm}+b_i\partial_iv^{\pm} -\partial_t v^{\pm}&= -\frac{2snK}{r^2}+\frac{2s}{r^2}a^{ii}+\frac{2s}{r^2}b_i(x_i-(x_0)_i)-C_0b_n\\
&\leq -\frac{2snK}{r^2}+\frac{2sn\Lambda_0}{r^2}+\frac{2snC_0}{r}+C_0^2
\end{align*}
Using the definition of $K$ and $s\geq r$ we have
\begin{align*}
a^{ij}\partial^2_{ij}v^{\pm}+b_i\partial_iv^{\pm}-\partial_tv^{\pm}\leq 0 \text{ in }\widehat{Q}^+.
\end{align*}
On $\widehat{Q}'=B'_r(x_0)\times (t_0,t_0+r^2/(4nK)]$ we have
\begin{align*}
\partial_{n}v^\pm = \pm f - C_0 \leq 0.
\end{align*}
On $\partial_p\widehat{Q}\cap \{x_n>0\}$, if $t=t_0$ then
\begin{align*}
v^\pm(x,t)&\geq (L+C_0)r\pm (u(x,t_0)-u(x_0,t_0))-C_0x_n\\
&\geq (L+C_0)r-Lr-C_0 r\\
&\geq 0
\end{align*}
and if $|x-x_0|=r$, 
\begin{align*}
v^\pm(x,t)&\geq s+(L+C_0)r \pm ( u(x,t)-u(x_0,t)+u(x_0,t)-u(x_0,t_0))-C_0x_n\\
&\geq s + (L+C_0)r -Lr - s - C_0 r\\
&\geq 0.
\end{align*}
Hence by the maximum principle we have $v^{\pm}\geq 0$ in $\widehat{Q}^+$. Evaluating the inequality at $x=x_0$ and taking the supremum over all $t$ gives
\begin{equation*}
s\leq \frac{2snK}{r^2}\frac{r^2}{4nK}+(L+C_0)r,
\end{equation*}
which yields
\begin{equation}\label{eq:timeholder}
s\leq  2(L+C_0 )r.
\end{equation}
This implies \eqref{eq:reg_in_t} for $x_0\in B'_{1/2}$.

For $x_0\in B_{1/2}^+$, depending on $B_{r/2}(x_0)\cap \{x_n=0\}$ is empty or not, we either use the same argument as above (with $C_0x_n$ being replaced by $C_0(x_n-(x_0)_n)$), or use Theorem 2.13 in \cite{Lieberman}.
\end{proof}

\bibliographystyle{amsplain}
\bibliography{refma}

\end{document}